\begin{document}
\newtheorem{theorem}[subsection]{Theorem}
\newtheorem{proposition}[subsection]{Proposition}
\newtheorem{lemma}[subsection]{Lemma}
\newtheorem{corollary}[subsection]{Corollary}
\newtheorem{conjecture}[subsection]{Conjecture}
\newtheorem{prop}[subsection]{Proposition}
\numberwithin{equation}{section}
\renewcommand{\thefootnote}{\fnsymbol{footnote}}
\newcommand{\dif}{\mathrm{d}}
\newcommand{\intz}{\mathbb{Z}}
\newcommand{\ratq}{\mathbb{Q}}
\newcommand{\natn}{\mathbb{N}}
\newcommand{\comc}{\mathbb{C}}
\newcommand{\rear}{\mathbb{R}}
\newcommand{\prip}{\mathbb{P}}
\newcommand{\uph}{\mathbb{H}}
\newcommand{\sumstar}{\sideset{}{^*}\sum}

\title{Zero Density Theorems for Families of Dirichlet $L$-functions}

\author[C. C. Corrigan]{Chandler C. Corrigan} 
\address{School of Mathematics and Statistics, University of New South Wales, Sydney, NSW 2052, Australia}
\email{c.corrigan@student.unsw.edu.au}

\author[L. Zhao]{Liangyi Zhao} 
\address{School of Mathematics and Statistics, University of New South Wales, Sydney, NSW 2052, Australia}
\email{l.zhao@unsw.edu.au}

\date{\today}
\maketitle

\begin{abstract}
In this paper, we prove some zero density theorems for certain families of Dirichlet $L$-functions.  More specifically, the subjects of our interest are the collections of Dirichlet $L$-functions associated with characters to moduli from certain sparse sets and of certain fixed orders.
\end{abstract}

\section{Introduction}

It goes without saying that the locations of the non-trivial zeros of Dirichlet $L$-functions are of fundamental importance in analytic number theory.  Let $\chi$ be a Dirichlet character of conductor $q$.  Suppose that $\rho=\beta + i \gamma$ with $\beta$, $\gamma \in \rear$ is a non-trivial zero of the Dirichlet $L$-function $L(s,\chi)$.  Let $\sigma > 1/2$ and $T >0$.  Set 
\[ N(\sigma, T, \chi) = \# \{ \rho: L(\rho,\chi)=0, \; \beta \geq \sigma, \; |\gamma| \leq T \} . \]
The generalized Riemann hypothesis (GRH) asserts that $\beta = 1/2$ for all $\rho$, i.e., $N(\sigma, T, \chi) = 0$ for all $\sigma > 1/2$ and $T>0$. \newline

Although GRH is currently still an unresolved conjecture, there have been many upper bounds over the past century for $N(\sigma, T, \chi)$ in the literature, both individually and on average as $\chi$ runs over a family of characters.  We refer the reader to \cite[Chapter 10]{HIEK} and \cite[Chapter 12]{HM} for discussions of these results.  In brief, these estimates, dubbed zero density theorems, amount to saying that the zeros lying off the critical line should at least be very rare. \newline

The aim of this paper is to extend these zero density results to various special collections of Dirichlet characters, more specifically, families of primitive Dirichlet characters to moduli from certain sparse sets and of certain fixed orders.  \newline

Our first result is on sparse sets of moduli.  Let $\mathcal{Q}$ be a set of natural numbers contained in $(Q_0, Q_0+Q]$.  Using the nomenclature of \cite{Ba1}, we define, for each $t \in \natn$, the set
\begin{equation*}
    \mathcal{Q}_t=\{q\in\mathbb{N}:tq\in\mathcal{Q}\} .
\end{equation*}
Suppose that for $t \in \natn$ and $0 \leq Q_0 \leq Q$, there is a $\Phi\geq 1$ such that the bound
\begin{equation}\label{atukl}
    \max_{Q_0/t\leq v\leq (Q_0+Q)/t} | \{q\in\mathcal{Q}_t\cap(v,v+u]:q\equiv l\text{ mod }k\} | \leq\left(1+\frac{|\mathcal{Q}_t|tu}{Qk}\right)\Phi
\end{equation}
holds for $(k,l)=1$.  In this case, we say that the set $\mathcal{Q}$ is well-distributed.  We now state our results for sparse sets of moduli.

\begin{theorem} \label{sparse}  
Let $T >1$, $\varepsilon >0$ and $\mathcal{Q} \subset (Q_0,Q_0+Q]$, with $|\mathcal{Q}|\leq Q^{1/2}$, be a well-distributed set of natural numbers such that \eqref{atukl} holds with $\Phi \ll (QT)^{\varepsilon}$.  Then for sufficiently large $T$ and any $\tfrac{1}{2}\leq \sigma\leq 1$, we have
\begin{equation*}
    \sum_{q\in\mathcal{Q}} \ \sumstar_{\chi\bmod{q}}N(\sigma,T,\chi)\ll (QT)^\varepsilon\min\left(\eta_{Q,T}, |\mathcal{Q}|(QT)^{3(1-\sigma)/(2-\sigma)},\left(|\mathcal{Q}|Q^3T^2\right)^{(1-\sigma)/\sigma}\right)
\end{equation*}
where  
\begin{equation*}
    \eta_{Q,T} = T^{3(1-\sigma)/(2-\sigma)}\begin{cases}|\mathcal{Q}|^{3(3-4\sigma)/(5-4\sigma)}Q&\text{if}\quad\tfrac{1}{2}\leq\sigma\leq\tfrac{3}{4}\\ \left(|\mathcal{Q}|^{4\sigma-3}Q^{12\sigma-7}\right)^{(1-\sigma)/(9\sigma-4(\sigma^2+1))}&\text{otherwise.}\end{cases}
\end{equation*}
Here the implied constant depends on $\varepsilon$ alone.
\end{theorem}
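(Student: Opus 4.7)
The plan is to adapt the classical zero-detection method (à la Ingham--Huxley--Montgomery) to our sparse family of characters, using the hypothesis \eqref{atukl} to invoke a Baier-style large sieve inequality for characters mod $q\in\mathcal{Q}$.

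First, I would apply a mollifier $M_X(s,\chi)=\sum_{n\leq X}\mu(n)\chi(n)n^{-s}$ together with a Mellin-transform smoothing (by $\Gamma(w)Y^w$, shifted past the pole at $w=0$) to show that, for each non-trivial zero $\rho=\beta+i\gamma$ of $L(s,\chi)$ with $\beta\geq\sigma$ and $|\gamma|\leq T$,
\[
1\ll|U(\rho,\chi)|+|V(\rho,\chi)|+(qT)^{-A},
\]
where $U$ is a Dirichlet polynomial of length about $X$ arising from $M_X L-1$, and $V$ is a longer polynomial of length $Y\ll(qT)^{O(1)}$. Passing to a maximal $\gamma$-well-spaced subset of zeros (at the cost of $\log T$), it suffices to bound, for various dyadic lengths $N\in[X,Y]$, sums of the form
\[
\mathcal{S}(N)=\sum_{q\in\mathcal{Q}}\ \sumstar_{\chi\bmod q}\sum_{r}\Bigl|\sum_{N<n\leq 2N}c_n\chi(n)n^{-i\gamma_r}\Bigr|^2,
\]
with the $c_n$ coming from the coefficients of $U$, $V$, or a convolution thereof.

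Next, I would invoke a hybrid large-sieve inequality for characters mod $q\in\mathcal{Q}$ evaluated at well-spaced $t$-points. The well-distribution hypothesis \eqref{atukl} feeds into Baier's method to yield an estimate of the shape
\[
\mathcal{S}(N)\ll(QT)^{\varepsilon}\Phi\bigl(T|\mathcal{Q}|Q+N\bigr)\sum_{N<n\leq 2N}|c_n|^2,
\]
to be combined with Huxley's fourth-power moment and a Halász--Montgomery-type bound for Dirichlet polynomials on a short $t$-interval. Raising the polynomial $V$ to a suitable integer power $k$ (so that the resulting length exceeds the ``pivot point'' of the large sieve) then extracts mean-value information from the sparse family.

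From this menu of mean-value estimates, the three bounds inside the minimum follow from three choices of how to deploy them. The bound $|\mathcal{Q}|(QT)^{3(1-\sigma)/(2-\sigma)}$ is Ingham's density estimate applied character by character and summed trivially. The bound $(|\mathcal{Q}|Q^3T^2)^{(1-\sigma)/\sigma}$ is what the sparse large sieve produces when applied directly to $V^k$ (with effective family size $|\mathcal{Q}|Q\cdot T$ and an additional $Q^2$ factor from duality). Finally, $\eta_{Q,T}$ arises from coupling the sparse large sieve with either Huxley's fourth-moment bound (in the range $\tfrac12\leq\sigma\leq\tfrac34$) or with Huxley's reflection principle based on higher moments (in the range $\tfrac34<\sigma\leq 1$). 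The main obstacle is the latter range, where the unusual exponent $(1-\sigma)/\bigl(9\sigma-4(\sigma^2+1)\bigr)$ emerges from a delicate Hölder-type interpolation in which one must track the dependence on $|\mathcal{Q}|$, $Q$, $T$ and $\Phi$ simultaneously, and choose both the mollifier length $X$ and the truncation $Y$ optimally so that all contributions balance. Once the parameters are chosen correctly in each regime, summing the dyadic pieces and collecting the $(QT)^{\varepsilon}$ losses from the well-spacing and the large sieve gives the claimed estimate.
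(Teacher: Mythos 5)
Your overall architecture (mollified zero detection, splitting into a short polynomial near the half-line and a long polynomial, well-spacing, a sparse-moduli large sieve, the fourth moment, and a Hal\'asz-type argument for the third term in the minimum) is the same as the paper's, which runs everything through its general Theorems~\ref{mainresult1} and~\ref{mainresult2}. But there is a concrete gap that prevents your sketch from producing $\eta_{Q,T}$: you state the sparse large sieve in the form $\Phi\left(T|\mathcal{Q}|Q+N\right)\sum|c_n|^2$, whereas the input actually needed is Baier's bound \eqref{baiersparse}, whose essential feature is the \emph{middle} term $QN^{1/2}$ (hybridized to $QT^{1/2}N^{1/2}$). With your two-term version, optimizing $X$ and $Y$ in the standard way yields $(|\mathcal{Q}|QT)^{3(1-\sigma)/(2-\sigma)}$, i.e.\ the bound one would get from the \emph{conjectural} large sieve \eqref{conjecturek}; the peculiar exponents $3(3-4\sigma)/(5-4\sigma)$ and $(1-\sigma)(4\sigma-3)/(9\sigma-4(\sigma^2+1))$ in $\eta_{Q,T}$ arise precisely from balancing against the $QT^{1/2}U^{3/2-2\sigma}$ contribution of that middle term, so without it the first component of the minimum cannot be derived.

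Two further points. First, your explanation of the case split at $\sigma=3/4$ via ``Huxley's reflection principle based on higher moments'' is not what happens: the same fourth-moment bound (Lemma~\ref{L4mom}, giving $\mathfrak{L}\ll|\mathcal{Q}|(QT)^{1+\varepsilon}$) is used in both ranges, and the dichotomy comes solely from whether $U\mapsto U^{3/2-2\sigma}$ is increasing or decreasing on $[X,Y^2]$, which decides whether the $QT^{1/2}U^{1/2}$ term is worst at $U\asymp Y$ or $U\asymp X$. Second, the bound $|\mathcal{Q}|(QT)^{3(1-\sigma)/(2-\sigma)}$ is not Ingham applied ``character by character'': summing an individual estimate over the $\asymp\phi(q)$ primitive characters to each modulus would cost an extra factor of $Q$. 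It comes from the classical large sieve applied to the whole family, as in \eqref{trivialsparse} (equivalently, from the modulus-by-modulus mean-value theorem over all characters to a fixed modulus, summed over $q\in\mathcal{Q}$).
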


As in \cite{Ba1}, one can easily check that the set of perfect $k$-powers, with $k \geq 2$, form a well-distributed sparse set.  Thus we readily get the following corollary from Theorem~\ref{sparse}.

\begin{corollary}\label{bzkpower}
For $k\geq3$, sufficiently large $Q$, $T>0$, and any $\varepsilon>0$, we have
\begin{equation*}
    \sum_{q\leq Q} \ \sumstar_{\chi\text{ mod }q^k}N(\sigma,T,\chi)\ll (QT)^\varepsilon\min\left(\left(Q^{3k+2-(3k+1)\sigma}T^{3(1-\sigma)}\right)^{1/(2-\sigma)},\left(Q^{3k+1}T^2\right)^{(1-\sigma)/\sigma}\right),
\end{equation*}
where the implied constant depends on $\varepsilon$ and $k$ at most.
\end{corollary}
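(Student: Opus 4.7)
My plan is to deduce the corollary from Theorem~\ref{sparse} applied to the set
\[
    \mathcal{Q} = \{q^k : 1 \leq q \leq Q\},
\]
viewed as a subset of the interval $(0, Q^k]$. In the notation of the theorem one therefore takes $Q_0 = 0$ and the role of $Q$ is played by $Q^k$, while $|\mathcal{Q}| = \lfloor Q \rfloor$; the sparsity hypothesis $|\mathcal{Q}| \leq (Q^k)^{1/2}$ holds trivially because $k \geq 3$.

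The main preliminary step is to verify the well-distribution condition~\eqref{atukl} with $\Phi \ll (QT)^\varepsilon$, following the approach of~\cite{Ba1}. Given $t \in \natn$, writing $t^*$ for the least positive integer with $t \mid (t^*)^k$, one checks that $\mathcal{Q}_t$ is parametrised by $m \in \{1, \dots, \lfloor Q/t^* \rfloor\}$ via $q = ((t^*)^k/t)\,m^k$. Counting those $q \in \mathcal{Q}_t \cap (v, v+u]$ lying in a prescribed reduced residue class modulo an integer $d$ (playing the role of the $k$ in the theorem statement) thus reduces to bounding the number of integers $m$ in a short interval satisfying a polynomial congruence of degree $k$ modulo $d$. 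Applying the standard estimate that $x^k \equiv a \bmod d$ has at most $k^{\omega(d)} \ll_{k,\varepsilon} d^\varepsilon$ solutions, combined with the trivial count of integers of an arithmetic progression lying in an interval, yields the desired bound with $\Phi \ll (QT)^\varepsilon$. Although the details are routine, this verification is the step requiring the most care.

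Once~\eqref{atukl} is verified, the corollary follows by a direct substitution into Theorem~\ref{sparse}. Setting $|\mathcal{Q}| = Q$ and replacing the theorem's $Q$ with $Q^k$, the second and third entries of the minimum simplify to
\[
    Q(Q^k T)^{3(1-\sigma)/(2-\sigma)} = \left( Q^{3k+2-(3k+1)\sigma}\,T^{3(1-\sigma)} \right)^{1/(2-\sigma)}
\]
and
\[
    \left( Q \cdot Q^{3k} T^2 \right)^{(1-\sigma)/\sigma} = \left( Q^{3k+1} T^2 \right)^{(1-\sigma)/\sigma}
\]
respectively, matching the bound stated in the corollary. It then remains only to observe that the $\eta_{Q,T}$ term in Theorem~\ref{sparse}, after the same substitution, is at least as large as the first expression above throughout $\tfrac12 \leq \sigma \leq 1$; this is a short comparison of exponents in each of the two cases in the definition of $\eta_{Q,T}$, and allows the $\eta$-term to be dropped from the minimum.
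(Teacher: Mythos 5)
Your overall route --- specialising Theorem~\ref{sparse} to $\mathcal{Q}=\{q^k:q\leq Q\}$ --- is essentially the paper's, and your substitution arithmetic is correct: with $|\mathcal{Q}|=Q$ and interval length $Q^k$, the second and third entries of the minimum in Theorem~\ref{sparse} become exactly the two terms claimed in the corollary. (Your closing step, checking that $\eta_{Q,T}$ dominates the second entry, is not actually needed: discarding a term from a minimum can only weaken an upper bound, so the corollary follows as soon as the other two entries are identified.)

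The genuine gap is in the step you yourself flag as the delicate one. With $Q_0=0$, the well-distribution condition \eqref{atukl} with $\Phi\ll(QT)^\varepsilon$ is \emph{false} for $\{q^k:q\leq Q\}\subset(0,Q^k]$, because the $k$-th powers cluster near the origin. Take $t=1$, modulus $1$ in \eqref{atukl}, $v=0$ and $u=Q^{k/2}$: the left-hand side counts all $q\leq Q$ with $q^k\leq Q^{k/2}$ and is therefore $\asymp Q^{1/2}$, whereas $1+|\mathcal{Q}|u/Q^{k}=1+Q^{1-k/2}\leq2$ for $k\geq2$, forcing $\Phi\gg Q^{1/2}$. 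The counting argument you sketch implicitly bounds the length of the $m$-interval by $u\,v^{1/k-1}c^{-1/k}$ with $v\gg Q^k/t$, which is only available when $Q_0\asymp Q^k$, not when $Q_0=0$. The standard repair is to decompose dyadically in $q$: each block $\{q^k:Q'<q\leq2Q'\}\subset\big(Q'^k,(2Q')^k\big]$ is well-distributed with $\Phi\ll(Q'T)^\varepsilon$ by precisely your argument, Theorem~\ref{sparse} applies to each block, and the $O(\log Q)$ blocks are summed, the resulting bound being monotone in $Q'$ so that the top block dominates up to a factor absorbed into $(QT)^\varepsilon$. Alternatively --- and this is what the paper actually does --- one bypasses \eqref{atukl} entirely by quoting the large sieve inequality \eqref{BZls} for $k$-th power moduli from \cite{LZ10}, which has exactly the shape of \eqref{baiersparse} with $|\mathcal{Q}|Q$ replaced by $Q^{k+1}$ and $Q$ by $Q^k$, and then rerunning the proof of Theorem~\ref{sparse} verbatim with \eqref{BZls} in place of \eqref{baiersparse}.
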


Corollary~\ref{bzkpower} also holds for $k=2$.  But for square moduli, we have the following which is better.

\begin{theorem}\label{bz2}
For sufficiently large $Q$, $T>0$ and any $\varepsilon>0$, we have
\begin{equation*}
    \sum_{q\leq Q} \ \sumstar_{\chi\bmod{q^2}}N(\sigma,T,\chi)\ll(QT)^\varepsilon \min\left(\eta_{Q,T},\left(Q^7T^2\right)^{(1-\sigma)/\sigma}\right)
\end{equation*}
where
\begin{equation*}
\eta_{Q,T}=\begin{cases}Q^{(17-16\sigma)/2(2-\sigma)}T^{3(1-\sigma)/(2-\sigma)}&\text{if}\quad\tfrac{1}{2}\leq\sigma\leq\tfrac{3}{4}\\ Q^{(1-\sigma)(28\sigma-17)/(9\sigma-4(\sigma^2+1))}T^{3(1-\sigma)/(2-\sigma)}&\text{otherwise,}
    \end{cases}
\end{equation*}
and the implied constant depends on $\varepsilon$ alone.
\end{theorem}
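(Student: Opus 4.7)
The proof of Theorem~\ref{bz2} follows the same Jutila--Montgomery-style zero-detection strategy that underlies Theorem~\ref{sparse}, the essential new ingredient being the Baier--Zhao large sieve inequality for characters of square moduli,
$$\sum_{q\leq Q} \ \sumstar_{\chi\bmod q^2} \left|\sum_{n\leq N}a_n\chi(n)\right|^2 \ll (QN)^\varepsilon (Q^3+N)\sum_n |a_n|^2 .$$
Applying Theorem~\ref{sparse} directly to $\mathcal{Q}=\{q^2:q\leq Q\}$ treats each $q^2$ only as an integer of size up to $Q^2$, and so pays the cost of the generic large-sieve factor $Q^4$; the inequality above saves one power of $Q$ in the modulus term, and it is precisely this saving that propagates to the improved exponents in Theorem~\ref{bz2}.

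First, I would adapt the zero-detection scheme from the proof of Theorem~\ref{sparse}: insert a mollifier $M_X(s,\chi)=\sum_{n\leq X}\mu(n)\chi(n)n^{-s}$ so that at every non-trivial zero $\rho=\beta+i\gamma$ of $L(s,\chi)$ with $\beta\geq\sigma$ and $|\gamma|\leq T$, either $M_X(\rho,\chi)$ is large or some dyadic Dirichlet polynomial $\sum_{N<n\leq 2N}b_n\chi(n)n^{-\rho}$, with $N$ lying in a prescribed range, is large. Passing to a well-spaced set of zeros and performing a dyadic subdivision in both $N$ and $\gamma$ then reduces matters to counting, for each dyadic $N$, the pairs $(\chi,\gamma_r)$ at which the short Dirichlet polynomial exceeds a threshold depending on $\sigma$.

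The mean-value step is where the new ingredient enters. A direct application of the Baier--Zhao large sieve, with $N$ chosen so that $Q^3$ and $N$ balance, yields the bound $(Q^7T^2)^{(1-\sigma)/\sigma}$. For the sharper estimate $\eta_{Q,T}$, I would combine the square-moduli large sieve with Heath-Brown's fourth-power mean-value theorem in the form of a hybrid sieve in both the modulus- and the $t$-aspects. The case-split at $\sigma=\tfrac{3}{4}$ in the definition of $\eta_{Q,T}$ reflects the transition in the Baier--Zhao inequality from the regime where $Q^3$ dominates to the regime where $N$ dominates; optimising the mollifier length $X$ and the dyadic length $N$ against this transition produces the two branches of the stated exponent.

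The main obstacle is the parameter optimisation required to pin down the exact exponents appearing in $\eta_{Q,T}$, particularly the exponent $(28\sigma-17)(1-\sigma)/(9\sigma-4(\sigma^2+1))$ in the $\sigma>\tfrac{3}{4}$ branch, which emerges from equating three contributions associated with the mollifier, the short Dirichlet polynomial, and the hybrid sieve. Once these optimisations are carried out, the rest of the argument mirrors the proof of Theorem~\ref{sparse}, with the Baier--Zhao inequality slotted in at the mean-value step in place of the generic well-distributed large sieve.
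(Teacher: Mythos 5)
Your overall strategy (zero detection with the mollifier $M_X$, a well-spaced set of zeros, the square-moduli large sieve at the mean-value step, plus a fourth-moment input) is indeed the paper's strategy, which runs everything through Theorem~\ref{mainresult1} with Lemma~\ref{L4mom}. However, there is a genuine gap: the large sieve inequality you quote, with majorant $(Q^3+N)$, is not the known Baier--Zhao result but the conjectural optimal form (the $k=2$ case of \eqref{conjecturek}). The actual inequality \eqref{baierzhao2} carries the additional term $\min\left(N\sqrt{Q},Q^2\sqrt{N}\right)$, and this term is not a technicality one can wave away: it is precisely what produces the case split at $\sigma=\tfrac{3}{4}$ and the exponents in $\eta_{Q,T}$. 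If your quoted inequality were available, the same machinery would give the cleaner and strictly stronger bound $(Q^3T)^{3(1-\sigma)/(2-\sigma)}$ with no case split at all, so your numerology cannot come out of the input you have written down. The true source of the dichotomy at $\sigma=\tfrac{3}{4}$ is that the contribution $Q^2T^{1/2}U^{3/2-2\sigma}$ from the extra term, with $U$ ranging over $[X,Y^2]$, is maximized at $U=Y^2$ when $\sigma\leq\tfrac{3}{4}$ and at $U=X$ otherwise --- not, as you suggest, a transition between the $Q^3$ and $N$ regimes.

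Two further points. First, the term $(Q^7T^2)^{(1-\sigma)/\sigma}$ does not come from ``balancing $Q^3$ against $N$'' in the large sieve; it is the Hal\'asz--Montgomery-type bound of Theorem~\ref{mainresult2}, namely $(\mathfrak{L}Q^2T)^{(1-\sigma)/\sigma}$ with $\mathfrak{L}\ll Q^3T^{1+\varepsilon}$ from Lemma~\ref{L4mom} and conductors up to $Q^2$ (equivalently, the third term of Theorem~\ref{sparse} specialized to $|\mathcal{Q}|=Q$). Second, the fourth-moment input here is the classical one for all characters to the moduli $q^2$ (Lemma~\ref{L4mom}, via Montgomery's Theorem 10.1), not Heath-Brown's fourth moment for quadratic characters, which is only used in the proof of Theorem~\ref{fixedorder}. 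To complete the proof you would need to restate \eqref{baierzhao2} correctly, feed it into Theorem~\ref{mainresult1}, and carry out the optimization in $X$ and $Y$ with the extra term present; the paper's choices are $X=Q^2T$, $Y=Q^{15/4(2-\sigma)}T^{3/2(2-\sigma)}$ for $\sigma\leq\tfrac{3}{4}$ and $X=Q^{(10\sigma-6)/(9\sigma-4(\sigma^2+1))}T$, $Y=Q^{(28\sigma-17)/(18\sigma-8(\sigma^2+1))}T^{3/2(2-\sigma)}$ otherwise.
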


Our result on fixed order characters is as follows.

\begin{theorem} \label{fixedorder}
Let $j \in \{2, 3, 4, 6\}$ and $\mathcal{C}_j(Q)$ be the collection of primitive Dirichlet characters of order $j$ and conductor $q \leq Q$.  We have for $T\gg1$ that
\begin{equation} \label{quadzeroden}
\sum_{\chi \in \mathcal{C}_2(Q)} N(\sigma, T, \chi) \ll (QT)^{\varepsilon} \min \left( (Q^3T^4)^{(1-\sigma)/(2-\sigma)} , (QT)^{3(1-\sigma)/\sigma} \right) ,
\end{equation}
for $T\gg Q^\frac{2}{3}$ that
\begin{equation} \label{CSzeroden}
\sum_{\chi \in \mathcal{C}_j(Q)} N(\sigma, T, \chi) \ll (QT)^{\varepsilon} \min \left( Q^{(125-108\sigma)/(90-72\sigma)}T^{(49-44\sigma)/(22-8\sigma)} , (QT)^{7(1-\sigma)/2\sigma} \right) , \quad \mbox{for} \; j =3, 6,
\end{equation}
and for $T\gg Q^\frac{1}{2}$ that
\begin{equation} \label{quartzeroden}
\sum_{\chi \in \mathcal{C}_4(Q)} N(\sigma, T, \chi) \ll (QT)^{\varepsilon} \min \left( Q^{(41-36\sigma)/(30-24\sigma)}T^{(49-44\sigma)/(22-8\sigma)} , (QT)^{7(1-\sigma)/2\sigma} \right) .
\end{equation}
Here the implied constants above depend on $\varepsilon$.
\end{theorem}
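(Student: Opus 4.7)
The plan is to employ the standard Montgomery--Huxley--Heath-Brown zero-detection method adapted to the thin families $\mathcal{C}_j(Q)$. For each zero $\rho = \beta + i\gamma$ with $\beta \geq \sigma$ and $|\gamma| \leq T$, the mollifier $M_Y(s,\chi) = \sum_{n \leq Y} \mu(n) \chi(n) n^{-s}$ converts the identity $L(\rho,\chi) M_Y(\rho,\chi) = 0$ into the statement that at least one of $O(\log(QT))$ Dirichlet polynomials $D_N(\rho,\chi) = \sum_{n \asymp N} a_n \chi(n) n^{-\rho}$, with $N$ in a dyadic range between $Y$ and $(QT)^{O(1)}$, satisfies $|D_N(\rho,\chi)| \gg 1$. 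After passing to a subset of zeros that are well-spaced in $\gamma$, at the cost of only $\log(QT)$, the proof reduces to bounding mean values
\begin{equation*}
\mathcal{M}(N,T) = \sum_{\chi \in \mathcal{C}_j(Q)} \sum_{r} \bigl| D_N(\tfrac{1}{2} + i\gamma_r , \chi) \bigr|^2 .
\end{equation*}

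The decisive input is a large-sieve inequality tailored to primitive characters of a fixed order. For $j = 2$ one has Heath-Brown's quadratic large sieve
\begin{equation*}
\sum_{\chi \in \mathcal{C}_2(Q)} \Bigl| \sum_{n \leq N} a_n \chi(n) \Bigr|^2 \ll (QN)^{\varepsilon}(Q+N) \|a\|_2^2 ,
\end{equation*}
while for $j \in \{3,6\}$ and $j = 4$ one uses the cubic/sextic large sieve of Baier--Young and the quartic large sieve of Gao--Zhao respectively; these become effective (linear in $Q+N$) only once $N \gg Q^{2/3}$ and $N \gg Q^{1/2}$, which are precisely the thresholds appearing in the hypotheses. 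Coupling these with a Halász--Montgomery duality argument in the $\gamma$-aspect, one obtains hybrid bounds of the shape $\mathcal{M}(N,T) \ll (QT)^{\varepsilon}(Q + N)(T + N)\|a\|_2^2$, together with their fourth-power strengthenings obtained by squaring the Dirichlet polynomial and applying the same large sieves to the convolved coefficients.

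Two different balances in these mean values produce the two members of each minimum. The Ingham-type balance, applied at the critical line together with convexity in $\sigma$, produces the terms with exponent $(1-\sigma)/(2-\sigma)$; optimising $Y$ and $N$ against the factors of $Q$ and $T$ coming from the large sieve yields the polynomial-in-$Q,T$ bases in (\ref{quadzeroden})--(\ref{quartzeroden}). The Huxley--Jutila reflection-principle balance, obtained by applying the functional equation to the short polynomial and interpolating via Borel--Carath\'eodory, gives the alternative bounds of shape $(QT)^{c(1-\sigma)/\sigma}$; the constants $c = 3$ for $j = 2$ and $c = 7/2$ for $j \in \{3,4,6\}$ reflect the use of a second-moment large sieve in the quadratic case versus the effectively fourth-moment estimates available in the higher-order cases.

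The main obstacle will be the parameter optimisation. The peculiar exponents $(125 - 108\sigma)/(90 - 72\sigma)$ and $(41 - 36\sigma)/(30 - 24\sigma)$ emerge from simultaneously minimising a sum of several competing contributions: the large-sieve term $(Q + N)$, the Halász--Montgomery term $(T + N)$, and the trivial factor $N^{2(1-\sigma)}$ from bounding $|D_N|^2$ at $\Re s = \sigma$ rather than at $1/2$. The thresholds $T \gg Q^{2/3}$ and $T \gg Q^{1/2}$ arise exactly because, under those constraints, the optimal choice of $N$ remains above the thresholds at which the cubic/sextic and quartic large sieves are linear; without these conditions the inferior off-diagonal contributions in the corresponding large sieves would dominate and the stated exponents would no longer be attainable.
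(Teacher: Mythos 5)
Your overall strategy is the paper's: zero detection with the mollifier, a split into a ``large Dirichlet polynomial'' class and a class handled via H\"older using a fourth moment of $L(\tfrac12+it,\chi)$ together with a second moment of the mollifier, the fixed-order large sieves of Heath-Brown, Baier--Young and Gao--Zhao as the decisive inputs, and a Hal\'asz-type argument for the second member of each minimum. But two of your concrete claims are wrong in ways that would prevent you from recovering the stated exponents. First, the hybrid mean value you assert, $\mathcal{M}(N,T)\ll (QT)^{\varepsilon}(Q+N)(T+N)\|a\|_2^2$, is not the correct shape: the Gallagher/Montgomery passage from a character sum bound $\Delta(Q,N)=\sum_l Q^{A_l}N^{B_l}$ to a hybrid bound replaces each term by $Q^{A_l}N^{B_l}T^{1-B_l}$ (so $Q+N$ becomes $QT+N$, and $QN+N^2$ becomes $QNT^0+\cdots$, etc.), as in the paper's Lemma~\ref{Savebound}. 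Your multiplicative form $(Q+N)(T+N)$ contains spurious terms like $N^2$ and $NT$ that dominate for the long polynomials (length up to $Y^2$) occurring in zero detection, and the optimization would then not produce $(Q^3T^4)^{(1-\sigma)/(2-\sigma)}$ or the exponents in \eqref{CSzeroden}--\eqref{quartzeroden}.

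Second, you misattribute the hypotheses $T\gg Q^{2/3}$ and $T\gg Q^{1/2}$. They are not conditions ensuring the cubic/quartic large sieves are ``linear in $Q+N$'' at the optimal polynomial length -- indeed the Baier--Young and Gao--Zhao bounds used in the zero-counting step have the form $Q^{11/9}+Q^{2/3}N$ (resp.\ $Q^{7/6}+Q^{2/3}N$) and are never of the form $Q+N$. Rather, these hypotheses are exactly what is needed for the fourth-moment estimates $\sum_{\chi\in\mathcal{C}_j(Q)}|L(\tfrac12+it,\chi)|^4\ll (QT)^{3/2+\varepsilon}$, which the paper proves by running Heath-Brown's fourth-moment argument with the \emph{other} (first) members of the large-sieve minimums, namely $Q^{5/2}N^{1/2}+N^{3/2}$ and $Q^{3/2}N^{1/2}+N^{3/2}$. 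Without isolating this fourth-moment input (and the corresponding $\mathfrak{L}=(QT)^{1+\varepsilon}$ for $j=2$ from Heath-Brown's Theorem 2), neither the first members of the minimums in \eqref{quadzeroden}--\eqref{quartzeroden} nor the exponents $3(1-\sigma)/\sigma$ and $7(1-\sigma)/2\sigma$ in the Hal\'asz-type second members (which come from $(\mathfrak{L}Q^2T)^{(1-\sigma)/\sigma}$) can be obtained. Finally, you nowhere carry out the choice of the mollifier length $X$ and the detection parameter $Y$; the paper's exponents come from explicit choices such as $X=QT$, $Y=(Q^3T^4)^{1/(2(2-\sigma))}$ for $j=2$, and these need to be exhibited for the proof to be complete.
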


\section{The setup}

Our plan of attack goes along similar lines as those in \cite[Chapter 12]{HM}.  Let $\alpha>0$ be some fixed constant and $\mathcal{C}$ a family of primitive Dirichlet characters none of which have conductors greater than $Q$.  Now define $\mathcal{R}$ to be a finite set of $(\rho,\chi)$ such that $L(\rho,\chi)=0$ for some $\chi \in \mathcal{C}$, where $\beta\geq\sigma>\frac{1}{2}$ and $|\gamma|\leq T$ for all $(\rho,\chi)\in\mathcal{R}$ and $|\gamma-\gamma'|\geq\alpha\log QT$ for some constant $\alpha$ and all distinct $(\rho,\chi)$ and $(\rho',\chi)\in\mathcal{R}$. \newline

Let $\{ a_n \}$ be a arbitrary sequence of complex numbers.  We define
\begin{equation*} \label{RSdef}
    R(\chi)=\sum_{n\leq N}a_n\chi(n) \quad\text{and}\quad S(s,\chi)=\sum_{n\leq N} \frac{a_n \chi(n)}{n^s}.
\end{equation*}
If $\{ A_l \}$ and $\{ B_l \}$ are sequences of non-negative real numbers and $L \in \natn$, then we set
\[ \Delta (Q,N) = \sum_{l \leq L} Q^{A_l} N^{B_l} \quad \mbox{and} \quad \Delta_T (Q,N) = \sum_{l \leq L} Q^{A_l} N^{B_l} T^{1-B_l}. \]

Under the above notations and conditions, we can show, using the same arguments as those for (12.28) or (12.29) in \cite{HM}, that it is possible to choose the elements of $\mathcal{R}$ so that for any $\varepsilon>0$
\begin{equation} \label{montgomerybound}
\sum_{\chi\in \mathcal{C}} N(\sigma,T,\chi) \ll (QT)^\varepsilon\left(|\mathcal{R}|+1\right),
\end{equation}
where the implied constant depends on $\varepsilon$ only.  Consequently, our attention is shifted to estimating the size of $\mathcal{R}$. \newline

We define for $X\geq2$
\begin{equation*}
M_X(s,\chi)=\sum_{n\leq X} \frac{\mu(n)\chi(n)}{n^s},
\end{equation*}
where $\mu$ is the M\"obius function.  We note here that the Dirichlet series of $\mathfrak{M}_X(s,\chi)=L(s,\chi)M_X(s,\chi)$ has coefficients $\mathfrak{m}_{X,n}\chi(n)$ with
\begin{equation*}
    \mathfrak{m}_{X,n}=\sum_{\substack{d|n\\d\leq X}}\mu(d).
\end{equation*}
Thus $\mathfrak{m}_{X,1}=1$, $\mathfrak{m}_{X,n}=0$ for $2\leq n\leq X$, and $|\mathfrak{m}_{X,n}|\leq \tau(n)$ for $n>X$, with $\tau$ denoting the divisor function. \newline

Now we consider the Dirichlet series with coefficients $\mathfrak{m}_{X,n}\chi(n)e^{-n/Y}$ where $1\ll X\ll Y\ll(QT)^K$ for some sufficiently large $K\geq1$.  We have, from (12.25) and (12.26) of \cite{HM}, that for sufficiently large $\alpha=3A$, each $(\rho,\chi)\in\mathcal{R}$ satisfies at least one of the inequalities
\begin{equation} \label{mon12.25}
    \left|\sum_{X<n\leq Y^2}\mathfrak{m}_{X,n}\chi(n)n^{-\rho}e^{-n/Y}\right|\geq\frac{1}{6}
\end{equation}
or
\begin{equation} \label{mon12.26}
    \frac{1}{2\pi}\left|\int\limits_{-A\log (QT)}^{A\log (QT)}\mathfrak{M}_X\left(\tfrac{1}{2}+i\gamma+iu,\chi\right)Y^{1/2-\beta+iu}\Gamma\left(\tfrac{1}{2}-\beta+iu\right) \dif u\right|\geq\frac{1}{6}.
\end{equation}
Let $\mathcal{R}_1$ and $\mathcal{R}_2$ be the sets consisting of all elements of $\mathcal{R}$ satisfying \eqref{mon12.25} and \eqref{mon12.26}, respectively.  Hence
\begin{equation} \label{RR1R2}
 | \mathcal{R} | \leq |\mathcal{R}_1| + |\mathcal{R}_2|
\end{equation}
and it suffices to estimate from above the sizes of $\mathcal{R}_1$ and $\mathcal{R}_2$. \newline

Along similar lines as the treatment in \cite{HM}, we obtain
\begin{equation} \label{sumu2u}
| \mathcal{R}_1 | \ll(\log Y)^3 \sum_{(\rho,\chi)\in\mathcal{R}_1}\left|\sum_{n=U}^{2U}\mathfrak{m}_{X,n}\chi(n)n^{-\rho}e^{-n/Y}\right|^2,
\end{equation}
for some $U$ with $X \leq U\leq Y^2$.  For $\mathcal{R}_2$, we get
\begin{align} \label{monhol}
| \mathcal{R}_2 | &\ll Y^{2/3-4\sigma/3} (QT)^\varepsilon\left(\sum_{(\rho,\chi)\in\mathcal{R}_2}\left|L\left(\tfrac{1}{2}+it_\rho,\chi\right)\right|^4\right)^{1/3} \left(\sum_{(\rho,\chi)\in\mathcal{R}_2}\left|M_X\left(\tfrac{1}{2}+it_\rho,\chi\right)\right|^2\right)^{2/3}.
\end{align}
Here for each $(\rho,\chi) \in \mathcal{R}_2$, $t_{\rho}$ is defined to be the real number in the interval $[\gamma - A \log (QT), \gamma+A \log (QT)]$ for which $|\mathfrak{M}_X(\frac{1}{2}+it_{\rho},\chi)|$ is maximum. \newline

Now we are led to estimate sums of the form,
\begin{equation} \label{Save}
\sum_{\chi\in \mathcal{C}} \sum_{s\in\mathcal{S}_\chi}\bigg|\sum_{n\leq N} \frac{a_n\chi(n)}{n^s}\bigg|^2,
\end{equation}
where $\mathcal{S}_\chi$ is a set of complex numbers.  To that end, various kinds of large sieve inequalities will play an indispensable role.  We refer the reader to \cites{Ramare, HM} and \cite[Chapter 7]{HIEK} for more extensive discussions on the large sieve, a subject of independent interest. \newline

We first write down a general result for sums of the form \eqref{Save}.  

\begin{lemma} \label{Savebound}
Let $\mathcal{C}$ be an arbitrary set of primitive Dirichlet characters with conductors at most $Q$ and $\mathcal{S}_\chi$ be a finite set of complex numbers $s=\sigma+it$.  Suppose $T_0$, $T$, $\sigma_0>\delta>0$ are such that $T_0+\delta/2 \leq|t|\leq T_0+T-\delta/2$ for all $s\in\mathcal{S}_\chi$, $1/2 \leq \sigma_0 \leq \sigma \leq 1$ for all $s\in\mathcal{S}_\chi$ , and $|t-t'|\geq\delta$ for distinct $s$, $s'\in\mathcal{S}_\chi$.  If the bound
\[ \sum_{\chi \in \mathcal{C}} | R(\chi)|^2 \ll \Delta (Q,N) \sum_{n \leq N} |a_n|^2 \]
 holds, then we have
 \[ \sum_{\chi \in \mathcal{C}} \sum_{s \in \mathcal{S}_\chi} | S(s, \chi)|^2 \ll \left( \frac{1}{\delta} + \log N \right) \Delta_T(Q,N)  \sum_{n \leq N} \frac{|a_n|^2}{ n^{2\sigma_0}} \left(1+\log\frac{\log2N}{\log2n}\right) . \]
\end{lemma}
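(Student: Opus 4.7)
The strategy is to combine Gallagher's inequality (converting the discrete sum over $s\in\mathcal{S}_\chi$ into a continuous integral in $t$) with a dyadic decomposition in $n$ (enabling the hypothesis to be applied on shorter polynomials), thereby extracting the $T^{1-B_l}$ scaling that distinguishes $\Delta_T$ from $\Delta$.

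Step~1. For fixed $\chi\in\mathcal{C}$, set $F(t)=S(\sigma_0+it,\chi)$ and apply Gallagher's lemma to the $\delta$-separated set of imaginary parts of $s\in\mathcal{S}_\chi$ inside $J=\{t:T_0\leq|t|\leq T_0+T\}$:
\[
\sum_{t_r}|F(t_r)|^2 \ll \delta^{-1}\int_J|F(t)|^2\,dt + \left(\int_J|F|^2\,dt\right)^{1/2}\left(\int_J|F'|^2\,dt\right)^{1/2}.
\]
The coefficients of $F'$ are $-i(\log n)a_n\chi(n)/n^{\sigma_0}$, bounded by $\log N$ times those of $F$, so Cauchy--Schwarz (in both $t$ and $\chi$) yields
\[
\sum_\chi\sum_s|S(s,\chi)|^2 \ll (\delta^{-1}+\log N)\sum_\chi\int_J|S(\sigma_0+it,\chi)|^2\,dt,
\]
after a short preliminary argument (integrating $\partial_\sigma|S|^2=2\Re(S\overline{\partial_\sigma S})$ in $\sigma$ and using $\sigma-\sigma_0\leq 1/2$) reduces to $\sigma=\sigma_0$ uniformly in $s$.

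Step~2. Decompose $S=\sum_{j=0}^{K}S^{(j)}$, where $S^{(j)}$ is the part of $S$ supported on $I_j=(2^{j-1},2^j]$ and $K=\lceil\log_2 N\rceil$. A weighted Cauchy--Schwarz with weights $w_j\asymp1+\log(K/j)$ (so $\sum_j w_j^{-1}\ll 1$) gives $|S|^2\ll\sum_j w_j|S^{(j)}|^2$, and the weight $w_j$ attached to $I_j$ matches $1+\log(\log 2N/\log 2n)$ when $n\asymp 2^j$. For each dyadic block with $M=2^j$, apply the hypothesis pointwise in $t$ (to the coefficients $a_n n^{-it}/n^{\sigma_0}$ supported on $I_j$) to get $\sum_\chi|S^{(j)}(\sigma_0+it,\chi)|^2\leq\Delta(Q,M)\sum_{n\in I_j}|a_n|^2/n^{2\sigma_0}$, and the Montgomery--Vaughan mean-value theorem per $\chi$ to get $\int_J|S^{(j)}|^2\,dt\ll(T+M)\sum_{n\in I_j}|a_n|^2/n^{2\sigma_0}$.

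The principal obstacle is the combinatorial balancing of these two bounds: term-by-term in $\Delta(Q,M)=\sum_l Q^{A_l}M^{B_l}$, the hypothesis-based bound integrated in $t$ contributes $TQ^{A_l}M^{B_l}$ per block, while the Montgomery--Vaughan bound (summed over $\chi$ using $|\mathcal{C}|\leq\Delta(Q,1)$) contributes $\Delta(Q,1)(T+M)$. Combining these to recover $Q^{A_l}M^{B_l}T^{1-B_l}$ per block and summing the geometric series in $j$ to reconstitute $\Delta_T(Q,N)$ is the technical heart of the argument; any logarithmic losses introduced along the way are absorbed into the prefactor $(\delta^{-1}+\log N)$ and the coefficient weight $1+\log(\log 2N/\log 2n)$.
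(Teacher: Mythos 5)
Your Step~1 matches the paper: the reduction to $\sigma=\sigma_0$ (the paper does this by partial summation on the truncations $S_u$, which is also where the weight $1+\log\frac{\log 2N}{\log 2n}$ arises, via $\int_2^N\frac{\dif u}{u\log u}$) and the discrete-to-continuous passage via \cite[Lemma 1.4]{HM}, producing the factor $\delta^{-1}+\log N$. The genuine gap is in Step~2, exactly the step you defer as ``the technical heart'': the two ingredients you assemble cannot produce the factor $T^{1-B_l}$. Per dyadic block of length $M$ you have (a) the hypothesis applied pointwise in $t$ and then integrated over $J$, giving $T\,Q^{A_l}M^{B_l}$, and (b) Montgomery--Vaughan summed trivially over $\chi$, giving $\Delta(Q,1)(T+M)$. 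No combination of these recovers $Q^{A_l}M^{B_l}T^{1-B_l}$: any weighted geometric mean dominates the minimum, and the minimum already fails. Concretely, take the classical case $\Delta(Q,N)=Q^2+N$ and $M=N=Q^2T$: the target is $\Delta_T(Q,N)=Q^2T+N\asymp Q^2T$, whereas $\min\bigl(T\Delta(Q,N),\,\Delta(Q,1)(T+N)\bigr)\asymp Q^2T\min(T,Q^2)$, a polynomial loss.

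The missing idea is Gallagher's integral lemma, which is how \cite[Theorem 2]{Galla} --- the result the paper invokes to obtain \eqref{Sitbound2} --- is actually proved:
\[
\int\limits_{-T}^{T}\Big|\sum_n c_n n^{-it}\Big|^2\,\dif t\ll T^2\int\limits_0^\infty\Big|\sum_{x<n\leq xe^{1/T}}c_n\Big|^2\,\frac{\dif x}{x}.
\]
This converts the $t$-integral into character sums over intervals of length $\asymp x/T\leq N/T$, and applying the large sieve hypothesis to these \emph{short} intervals is what yields $T^2\cdot T^{-1}\cdot\sum_l Q^{A_l}(N/T)^{B_l}\sum_n|a_n|^2=\Delta_T(Q,N)\sum_n|a_n|^2$. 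Your dyadic blocks have length comparable to their starting point, so they see no shortening in $T$, which is why the $T$-saving never materializes. Two secondary issues: your weights $w_j\asymp 1+\log(K/j)$ do not satisfy $\sum_j w_j^{-1}\ll1$ (the terms with $j$ near $K$ each contribute $\asymp1$, so the sum is $\asymp K$), though this only costs a logarithm; and the short-interval application of the hypothesis requires the mean-value bound to depend on the length of the interval of summation rather than its endpoint, a point worth making explicit since the hypothesis is stated for initial segments $n\leq N$.
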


\begin{proof}
The proof is rather standard and thus we only give a sketch here.  Let 
\[ S_u(s,\chi) = \sum_{2 \leq n \leq u} a_n \chi(n) n^{-s}. \]
Partial summation and Cauchy's inequality give
\[ |S_u(s,\chi)|^2 \ll |a_1|^2 + |S(\sigma_0+it, \chi)|^2 + \int\limits_2^N |S_u(\sigma_0+it , \chi)|^2 \frac{\dif u}{u \log u} . \]
Using \cite[Lemma 1.4]{HM}, we get
\begin{equation} \label{Sitbound1}
 \sum_{\chi \in \mathcal{C}} \sum_{t \in \mathcal{T}_\chi} | S(it, \chi)|^2 \ll \left( \frac{1}{\delta} + \log N \right) \sum_{\chi \in \mathcal{C}} \int\limits_{T_0}^{T_0+T} |S(it, \chi)|^2 \dif t  
 \end{equation}
where $\mathcal{T}_{\chi} = \{ t : s = \sigma +it \in \mathcal{S}_{\chi} \}$.  Now arguing along similar lines as the proof of \cite[Theorem 2]{Galla}, we arrive at
\begin{equation} \label{Sitbound2}
 \sum_{\chi \in \mathcal{C}} \int\limits_{T_0}^{T_0+T} |S(it, \chi)|^2 \dif t \ll \Delta_T(Q,N) \sum_{n \leq N} |a_n|^2.
 \end{equation}
The desired bound follows easily by combining all the bounds above.
\end{proof}

Thus from our discussion above, we have the following general result which can be used for deriving a zero density result for any collection of primitive Dirichlet characters, if the corresponding large sieve inequality and bound for the fourth moment of $L$-functions are available.

\begin{theorem}\label{mainresult1}
Let $\mathcal{C}$ be a finite family of primitive Dirichlet characters, none of which have conductors greater than $Q$, and suppose that
\begin{equation*}
    \sum_{\chi\in\mathcal{C}}|R(\chi)|^2\ll\Delta(Q,N)\sum_{n\leq N}|a_n|^2\quad\text{and}\quad\sum_{\chi\in\mathcal{C}}\int\limits_{-T}^{T}\left|L\left(\tfrac{1}{2}+it,\chi\right)\right|^4\dif t\ll\mathfrak{L}
\end{equation*}
hold.  Then for any $\tfrac{1}{2} \leq \sigma \leq 1$, and $X,Y$ satisfying $1\ll X\ll Y\ll (QT)^K$ for some absolute constant $K$, there is a $U$ with $X\ll U\ll Y^2$ such that
\begin{equation*}
    \sum_{\chi\in\mathcal{C}}N(\sigma,T,\chi)\ll (QT)^\varepsilon\left(\mathfrak{L}^{1/3} \Delta_T(Q,X)^{2/3}Y^{2(1-2\sigma)/3}+\Delta_T(Q,U)U^{1-2\sigma}e^{-2U/Y}\right),    
\end{equation*}
where the implied constant depends on $\varepsilon$ alone.
\end{theorem}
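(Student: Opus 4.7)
The plan is to combine the setup established in Section 2 with the two hypotheses of the theorem, applied to the appropriate short Dirichlet polynomials. By \eqref{montgomerybound} and \eqref{RR1R2}, it suffices to bound $|\mathcal{R}_1|$ and $|\mathcal{R}_2|$ separately in terms of $\Delta(Q,\cdot)$ and $\mathfrak{L}$; the two resulting contributions will correspond exactly to the two terms in the stated bound.

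For $\mathcal{R}_1$, I would start from \eqref{sumu2u}, which already localises the problem to a dyadic block $U \leq n \leq 2U$ with $X \leq U \leq Y^2$, and bounds $|\mathcal{R}_1|$ (up to $(\log Y)^3$) by
\[ \sum_{(\rho,\chi)\in\mathcal{R}_1}\left|\sum_{n=U}^{2U}\mathfrak{m}_{X,n}\chi(n)e^{-n/Y}n^{-\rho}\right|^2. \]
This is exactly the kind of double sum handled by Lemma~\ref{Savebound}, applied with $a_n = \mathfrak{m}_{X,n}e^{-n/Y}$, $\sigma_0 = \sigma$, and spacing $\delta \asymp \log(QT)$ coming from the well-spacing of the $\gamma$'s. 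Using $|\mathfrak{m}_{X,n}|\leq \tau(n)$ together with the standard estimate $\sum_{n\sim U}\tau(n)^2 \ll U(\log U)^3$, the weighted sum $\sum |a_n|^2 n^{-2\sigma_0}$ is $\ll U^{1-2\sigma}e^{-2U/Y}$ up to logarithmic factors absorbable in $(QT)^\varepsilon$, giving
\[ |\mathcal{R}_1| \ll (QT)^\varepsilon \, \Delta_T(Q,U)\, U^{1-2\sigma}e^{-2U/Y}. \]

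For $\mathcal{R}_2$, I would start from \eqref{monhol} and treat the two factors on the right separately. The $M_X$-factor is handled by Lemma~\ref{Savebound} with $a_n = \mu(n)$ on $n \leq X$ and $\sigma_0 = 1/2$; since $\sum_{n\leq X} n^{-1} \ll \log X$, this contributes $(QT)^\varepsilon\,\Delta_T(Q,X)$. The fourth-moment factor is handled by the hypothesis on $\mathfrak{L}$, once one converts the discrete sum $\sum_{(\rho,\chi)\in\mathcal{R}_2}|L(\tfrac12+it_\rho,\chi)|^4$ into the integral. For each fixed $\chi$, the separation $|\gamma-\gamma'|\geq 3A\log(QT)$ together with $|t_\rho-\gamma|\leq A\log(QT)$ ensures that the $t_\rho$'s are pairwise at distance at least $A\log(QT)$, so a Gallagher-type discretisation yields the integral up to a factor of $(QT)^\varepsilon$. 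Substituting both estimates into \eqref{monhol} and taking the $1/3$ and $2/3$ powers produces
\[ |\mathcal{R}_2| \ll (QT)^\varepsilon \, \mathfrak{L}^{1/3}\Delta_T(Q,X)^{2/3}Y^{2(1-2\sigma)/3}, \]
since $Y^{2/3-4\sigma/3} = Y^{2(1-2\sigma)/3}$.

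Adding the two estimates and invoking \eqref{montgomerybound} delivers the stated bound, the ``$+1$'' in \eqref{montgomerybound} being harmlessly absorbed. The main technical step is the discrete-to-integral passage for the fourth moment; everything else is either already recorded in Section~2, is an application of Lemma~\ref{Savebound}, or is a routine bookkeeping of logarithmic factors that get swallowed by the $(QT)^\varepsilon$ on the outside.
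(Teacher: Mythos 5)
Your proposal is correct and follows essentially the same route as the paper: both reduce via \eqref{montgomerybound} and \eqref{RR1R2} to bounding $|\mathcal{R}_1|$ and $|\mathcal{R}_2|$, apply Lemma~\ref{Savebound} with $\delta\asymp\log QT$ to the dyadic block in \eqref{sumu2u} and to $M_X$ in \eqref{monhol}, and feed the fourth-moment hypothesis into \eqref{monhol} after a discretisation of the type in \cite[Theorem 10.3]{HM}. Your write-up simply makes explicit the choices of $a_n$ and $\sigma_0$ and the well-spacing of the $t_\rho$'s, which the paper leaves implicit.
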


\begin{proof}
We take $\delta=3A\log QT$ in Lemma~\ref{Savebound}, where $A$ is as in \eqref{mon12.26}, and obtain 
\begin{equation}\label{thmr1}
    \sum_{(\rho,\chi)\in\mathcal{R}_1}\Big|\sum_{n=U}^{2U}\mathfrak{m}_{X,n}\chi(n)n^{-\rho}e^{-n/Y}\Big|^2\ll(QT)^\varepsilon\Delta_T(Q,U)U^{1-2\sigma}e^{-2U/Y},
\end{equation}
and
\begin{equation}\label{mxthm}
    \sum_{(\rho,\chi)\in\mathcal{R}_2}\left|M_X\left(\tfrac{1}{2}+it,\chi\right)\right|^2\ll(QT)^\varepsilon\Delta_T(Q,X).
\end{equation}
Using similar methods to \cite[Theorem 10.3]{HM} we can show that
\begin{equation}\label{ellbound}
    \sum_{(\rho,\chi)\in\mathcal{R}_2}\left|L\left(\tfrac{1}{2}+it,\chi\right)\right|^4\ll(QT)^\varepsilon\mathfrak{L}.
\end{equation}
Now, from \eqref{sumu2u} and \eqref{thmr1} we obtain a bound for $|\mathcal{R}_1|$ and \eqref{monhol}, \eqref{mxthm}, and \eqref{ellbound} give rise to a majorant for $|\mathcal{R}_2|$.  The result now follows from \eqref{montgomerybound} and \eqref{RR1R2}.
\end{proof}

Our second general result below does not require any large sieve type bound.

\begin{theorem}\label{mainresult2}  Let $\mathcal{C},Q,T,\mathfrak{L}$ be as in Theorem~\ref{mainresult1}.  Then for any $\tfrac{1}{2}<\sigma\leq1$ and any $\varepsilon>0$ we have
\begin{equation*}
    \sum_{\chi\in\mathcal{C}}N(\sigma,T,\chi)\ll(QT)^\varepsilon\left(\left(\mathfrak{L}Q^2T\right)^{(1-\sigma)/\sigma}+\left(Q^2T\right)^{(1-\sigma)/(2\sigma-1)}\right),
\end{equation*}
where the implied constant depends on $\varepsilon$ alone.
\end{theorem}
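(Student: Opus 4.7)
The plan is to work within the framework of Section~2 and to bound $|\mathcal{R}|\leq|\mathcal{R}_1|+|\mathcal{R}_2|$ invoking no Dirichlet-polynomial large sieve: only the given fourth-moment hypothesis $\mathfrak{L}$ and individual Cauchy--Schwarz estimates for the short Dirichlet polynomials appearing in \eqref{sumu2u} and \eqref{monhol}.  In the role of the large sieve I would use the almost-trivial estimate
\[
\sum_{\chi\in\mathcal{C}}|R(\chi)|^2\ll Q^2\sum_n|a_n|^2,
\]
which follows from orthogonality of primitive characters together with $|\mathcal{C}|\leq Q^2$.  In the language of the previous section this corresponds to taking $\Delta(Q,N)=Q^2$ and hence $\Delta_T(Q,N)=Q^2T$ in Lemma~\ref{Savebound}.

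For $|\mathcal{R}_2|$, I would combine \eqref{monhol} and \eqref{ellbound} with the pointwise estimate $|M_X(\tfrac12+it,\chi)|^2\ll X\log X$ (valid by Cauchy--Schwarz on $\sum_{n\leq X}\mu(n)\chi(n)n^{-1/2-it}$), which gives $\sum_{(\rho,\chi)\in\mathcal{R}_2}|M_X|^2\ll |\mathcal{R}_2|X\log X$.  Feeding this into \eqref{monhol} produces the self-referential inequality
\[
|\mathcal{R}_2|\ll(QT)^\varepsilon Y^{(2-4\sigma)/3}\mathfrak{L}^{1/3}\bigl(|\mathcal{R}_2|X\bigr)^{2/3},
\]
which, solved for $|\mathcal{R}_2|$, gives $|\mathcal{R}_2|\ll(QT)^\varepsilon\mathfrak{L}X^2Y^{2-4\sigma}$.

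For $|\mathcal{R}_1|$, I would apply Lemma~\ref{Savebound} with the same $\Delta$, which together with \eqref{sumu2u} delivers, at the dyadic scale $U$ supplied by the argument,
\[
|\mathcal{R}_1|\ll(QT)^\varepsilon Q^2T\,U^{1-2\sigma}e^{-2U/Y}.
\]
For $\sigma>\tfrac12$ both factors $U^{1-2\sigma}$ and $e^{-2U/Y}$ are decreasing in $U$, so the worst case over $U\in[X,Y^2]$ is $U=X$; for $X\leq Y$ the exponential factor is $O(1)$, leaving $|\mathcal{R}_1|\ll(QT)^\varepsilon Q^2TX^{1-2\sigma}$.

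Adding the two estimates one is reduced to the optimisation
\[
\sum_{\chi\in\mathcal{C}}N(\sigma,T,\chi)\ll(QT)^\varepsilon\bigl(\mathfrak{L}X^2Y^{2-4\sigma}+Q^2TX^{1-2\sigma}\bigr)
\]
over $1\ll X\leq Y\ll(QT)^K$.  I expect balancing the two contributions at an interior choice of $X$ comparable to $Y$ to yield the first stated term $(\mathfrak{L}Q^2T)^{(1-\sigma)/\sigma}$, while pushing $Y$ close to its polynomial ceiling (so that the factor $Y^{2-4\sigma}$ effectively suppresses the $\mathcal{R}_2$-bound) and then performing an Ingham-type optimisation of $Q^2TX^{1-2\sigma}$ alone should yield the second term $(Q^2T)^{(1-\sigma)/(2\sigma-1)}$.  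The main obstacle will be this final algebraic optimisation: extracting the precise exponents $(1-\sigma)/\sigma$ and $(1-\sigma)/(2\sigma-1)$ cleanly requires careful bookkeeping of the two-sided competition between the hypothesis-driven term $\mathfrak{L}X^2Y^{2-4\sigma}$ and the hypothesis-free term $Q^2TX^{1-2\sigma}$, and of the constraint $X\leq Y\ll(QT)^K$ throughout.
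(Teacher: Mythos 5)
Your treatment of $\mathcal{R}_2$ (the pointwise bound $|M_X(\tfrac12+it,\chi)|^2\ll X\log X$ fed into \eqref{monhol} and the resulting self-referential inequality) is fine and is essentially the standard argument. The proof fails, however, at the very first input for $\mathcal{R}_1$: the claimed estimate $\sum_{\chi\in\mathcal{C}}|R(\chi)|^2\ll Q^2\sum_n|a_n|^2$ is false. Orthogonality of the characters to a fixed modulus $q$ gives $\sum_{\chi\bmod q}|R(\chi)|^2\leq\phi(q)\sum_{m\equiv n\ (q)}|a_m||a_n|\ll(q+N)\sum_n|a_n|^2$, so summing over moduli one only gets $\ll(Q^2+QN)\sum_n|a_n|^2$; even the genuine multiplicative large sieve only reduces this to $Q^2+N$, and the $+N$ cannot be removed (take $a_n$ supported on a single residue class). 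The missing $N$-term is not a technicality: it is the only thing in $\Delta_T(Q,U)$ that grows with $U$, and without it your two bounds $|\mathcal{R}_1|\ll(QT)^\varepsilon Q^2TX^{1-2\sigma}$ and $|\mathcal{R}_2|\ll(QT)^\varepsilon\mathfrak{L}X^2Y^{2-4\sigma}$ both tend to $0$ as $X\to\infty$ with $Y\gg X^{2/(2\sigma-1)}$, say. Since $X,Y$ may be taken as large fixed powers of $QT$, your argument would yield $\sum_\chi N(\sigma,T,\chi)\ll(QT)^\varepsilon$ for every $\sigma>\tfrac12$ --- a quasi-GRH statement --- which is a reductio ad absurdum of the input bound.

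Relatedly, the final optimisation, which you defer, cannot produce the exponent $(1-\sigma)/(2\sigma-1)$ by the route you sketch: $Q^2TX^{1-2\sigma}$ is monotone decreasing in $X$ for $\sigma>\tfrac12$, so ``optimising it alone'' has no interior critical point and no second countervailing term to balance against. The actual source of the term $(Q^2T)^{(1-\sigma)/(2\sigma-1)}$ in the proof the paper cites (Montgomery's argument around (12.14), i.e.\ Hal\'asz's method) is a large-values inequality of Hal\'asz--Montgomery type for the class-one Dirichlet polynomial: for a dyadic block of length $U$ one obtains $|\mathcal{R}_1|\ll(QT)^\varepsilon U^{2-2\sigma}$ \emph{only under the threshold condition} $U^{2\sigma-1}\gg(Q^2T)^{1/2}$, coming from the off-diagonal terms $\ll(Q^2T)^{1/2}$ in the duality step. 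Taking $X$ at this threshold gives precisely $X^{2-2\sigma}=(Q^2T)^{(1-\sigma)/(2\sigma-1)}$, and balancing the class-two bound then yields $(\mathfrak{L}Q^2T)^{(1-\sigma)/\sigma}$. This Hal\'asz-type ingredient --- which is what lets one dispense with the mean-value/large-sieve bound the theorem deliberately avoids --- is absent from your proposal, and without it (or a correct substitute carrying a term that grows with the polynomial length) the argument cannot be repaired as written.
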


\begin{proof}
The proof follows the same arguments as \cite[(12.14)]{HM}.  The only difference is that we do not insert any specific bound for the fourth moment of $L$-functions.
\end{proof}

\section{Proof of Theorems~\ref{sparse} and~\ref{bz2}}

Before proving Theorems~\ref{sparse} and~\ref{bz2}, we need the following Lemma.

\begin{lemma}  \label{L4mom}
Let $\mathcal{Q}$ be as above.  Then for any $T\geqslant2$ and $\varepsilon>0$ we have
\begin{align*}
    \sum_{q\in\mathcal{Q}} \ \sumstar_{\chi\bmod{q}}\int\limits_{-T}^{T}\left|L\left(\tfrac{1}{2}+it,\chi\right)\right|^4\:\dif t\ll|\mathcal{Q}|(QT)^{1+\varepsilon},
\end{align*}
where the implied constant depends on $\varepsilon$ alone.
\end{lemma}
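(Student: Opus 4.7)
The plan is to reduce the fourth moment to a mean-square of a Dirichlet polynomial via the approximate functional equation for $L(s,\chi)^2$, and then to invoke a hybrid large sieve inequality adapted to the well-distributed set $\mathcal{Q}$.

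First, for each primitive character $\chi\bmod q$ with $q\in\mathcal{Q}$ and $|t|\leq T$, I would apply an approximate functional equation for $L(s,\chi)^2$, which expresses $L(\tfrac{1}{2}+it,\chi)^2$ as the sum of a Dirichlet polynomial of length $\ll qT\ll QT$ with coefficients $d(n)\chi(n)/n^{1/2+it}$, a dual polynomial of the same shape coming from the functional equation, and a negligible tail. Squaring (and using $|a+b+c|^2\ll|a|^2+|b|^2+|c|^2$) then yields the pointwise bound
\[
\left|L\left(\tfrac{1}{2}+it,\chi\right)\right|^4 \ll \left|\sum_{n\leq N}\frac{d(n)\chi(n)}{n^{1/2+it}}\right|^2 + \left|\sum_{n\leq N}\frac{d(n)\overline{\chi}(n)}{n^{1/2-it}}\right|^2 + O\left((qT)^{-1}\right)
\]
for some $N\ll QT$. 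The two main terms can be treated symmetrically, so it suffices to estimate the first.

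Next, I would appeal to a hybrid large sieve tailored to the family of primitive characters with conductors in $\mathcal{Q}$. Combining Baier's large sieve for well-distributed sparse sets of moduli from \cite{Ba1} --- for which hypothesis \eqref{atukl} with $\Phi\ll(QT)^\varepsilon$ is precisely the required input --- with Gallagher's device (cf.\ \eqref{Sitbound1} and \eqref{Sitbound2}) for introducing the integral over $t\in[-T,T]$, one obtains an estimate of the shape
\[
\sum_{q\in\mathcal{Q}}\ \sumstar_{\chi\bmod q}\int\limits_{-T}^{T}\left|\sum_{n\leq N}\frac{a_n\chi(n)}{n^{1/2+it}}\right|^2\dif t \ll (QT)^\varepsilon\left(|\mathcal{Q}|QT+N\right)\sum_{n\leq N}\frac{|a_n|^2}{n}.
\]
Specialising to $a_n=d(n)$ with $N\ll QT$, the term $|\mathcal{Q}|QT$ dominates, and the divisor-sum estimate $\sum_{n\leq N}d(n)^2/n\ll(\log N)^4$ then delivers the claimed bound $\ll|\mathcal{Q}|(QT)^{1+\varepsilon}$.

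The main obstacle is to secure the large sieve with the sparsity-saving factor $|\mathcal{Q}|/Q\leq Q^{-1/2}$ over the classical hybrid large sieve for all moduli $\leq Q+Q_0$, which would only yield the weaker bound $\ll Q(QT)^{1+\varepsilon}$. Extracting this saving is precisely where the well-distribution hypothesis \eqref{atukl} is used, via Baier's framework. A secondary technicality is to handle the dual AFE term in parallel with the main term; this should present no real difficulty since the dual polynomial has the same coefficient size and effective length.
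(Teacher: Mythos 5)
Your overall architecture (approximate functional equation for $L(s,\chi)^2$ followed by a hybrid mean value theorem for the resulting Dirichlet polynomials) is the standard route to a hybrid fourth moment, but the key estimate you invoke is not available, and this is a genuine gap. Combining Baier's large sieve for well-distributed sets with Gallagher's device does not produce a bound of the shape $(QT)^\varepsilon\left(|\mathcal{Q}|QT+N\right)$: Baier's Theorem 2 (quoted as \eqref{baiersparse} in the paper) carries the additional term $QN^{1/2}$, whose hybrid analogue is $QN^{1/2}T^{1/2}\asymp Q^{3/2}T$ at the relevant length $N\asymp QT$. Since $|\mathcal{Q}|\leq Q^{1/2}$, this term is always at least $|\mathcal{Q}|QT$, and strictly larger whenever $|\mathcal{Q}|=o(Q^{1/2})$, so your argument as written only yields $\ll Q^{3/2}T(QT)^\varepsilon$, which falls short of the claimed $|\mathcal{Q}|(QT)^{1+\varepsilon}$ for genuinely sparse $\mathcal{Q}$. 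A large sieve of the clean form $|\mathcal{Q}|Q+N$ is the conjectural optimum in this setting (compare \eqref{conjecturek}), not a theorem.

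The repair is easy, and it shows that the ``main obstacle'' you identify is illusory: no saving from the well-distribution hypothesis \eqref{atukl} is needed at all. Simply apply the classical hybrid large sieve one modulus at a time, $\sumstar_{\chi\bmod{q}}\int_{-T}^{T}\bigl|\sum_{n\leq N}a_n\chi(n)n^{-it}\bigr|^2\,\dif t\ll(qT+N)\sum_{n}|a_n|^2$, and sum over $q\in\mathcal{Q}$ to obtain $\ll\left(|\mathcal{Q}|QT+|\mathcal{Q}|N\right)\sum_n|a_n|^2$. With $N\ll QT$ and $a_n=d(n)n^{-1/2}$ this is already $\ll|\mathcal{Q}|(QT)^{1+\varepsilon}$, because the diagonal term $N$ costs only an extra factor of $|\mathcal{Q}|$ upon summation over the moduli, not a factor of $Q$. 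This is in effect what the paper does: it quotes the single-modulus hybrid fourth moment bound $\sum_{\chi\bmod{q}}\int_{-T}^{T}|L(\tfrac{1}{2}+it,\chi)|^4\,\dif t\ll\varphi(q)T(\log qT)^4$ of \cite[Theorem 10.1]{HM} and sums it trivially over $q\in\mathcal{Q}$.
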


\begin{proof}
This result follows readily from \cite[Theorem 10.1]{HM}.
\end{proof}

Now we proceed with the proof of Theorem~\ref{sparse}.

\begin{proof}[Proof of Theorem~\ref{sparse}] From \cite[Theorem 2]{Ba1}, we have 
\begin{equation} \label{baiersparse}
    \sum_{q\in\mathcal{Q}} \ \sumstar_{\chi \bmod{q}}|R(\chi)|^2\ll(QN)^\varepsilon\left(|\mathcal{Q}|Q+N+QN^{1/2}\right)\sum_{n\leq N}|a_n|^2.
\end{equation}
Moreover, the classical large sieve inequality gives (see the discussion around (5.4) and (5.5) in \cite{LZ1})
\begin{equation}\label{trivialsparse}
    \sum_{q\in\mathcal{Q}} \ \sumstar_{\chi \bmod{q}}|R(\chi)|^2\ll \min\left(|\mathcal{Q}|(Q+N),Q^2+N\right)\sum_{n\leq N}|a_n|^2.
\end{equation}
Using \eqref{baiersparse}, Lemma~\ref{L4mom} and Theorem~\ref{mainresult1}, we obtain
\begin{equation*}
\begin{split}
    \sum_{q\in\mathcal{Q}} & \ \sumstar_{\chi\bmod{q}}N(\sigma,T,\chi) \\
    & \ll(QT)^\varepsilon\left((|\mathcal{Q}|QT)^{1/3}\left(|\mathcal{Q}|QT+X+QT^{1/2}X^{1/2}\right)^{2/3}Y^{2(1-2\sigma)/3}+|\mathcal{Q}|QTX^{1-2\sigma}+Y^{2-2\sigma} \right. \\
     & \hspace*{5cm}  \left. +QT^{1/2}\begin{cases}Y^{3/2-2\sigma}, &\text{if }\tfrac{1}{2}\leq\sigma\leq\tfrac{3}{4}\\X^{3/2-2\sigma} , &\text{otherwise}\end{cases}\right).
    \end{split}
\end{equation*}
On taking 
\begin{equation*}
    X=|\mathcal{Q}|^2T\quad\text{and}\quad Y=|\mathcal{Q}|^{6/(5-4\sigma)}T^{3/2(2-\sigma)}
\end{equation*}
in the case $1/2 \leq \sigma \leq 3/4$, and
\[  X=|\mathcal{Q}|^{(2\sigma-2)/(9\sigma-4(\sigma^2+1))}Q^{(4\sigma-2)/(9\sigma-4(\sigma^2+1))}T \]
 and
 \[ Y=|\mathcal{Q}|^{(4\sigma-3)/(18\sigma-8(\sigma^2+1))}Q^{(12\sigma-7)/(18\sigma-8(\sigma^2+1))}T^{3/2(2-\sigma)} \]
in the the other case, we obtain
\begin{equation}\label{cor1}
   \sum_{q\in\mathcal{Q}} \ \sumstar_{\chi\bmod{q}}N(\sigma,T,\chi)\ll(QT)^\varepsilon\begin{cases}|\mathcal{Q}|^{3(3-4\sigma)/(5-4\sigma)}QT^{3(1-\sigma)/(2-\sigma)}, &\text{if}\quad\tfrac{1}{2}\leq\sigma\leq\tfrac{3}{4}\\ \left(|\mathcal{Q}|^{4\sigma-3}Q^{12\sigma-7}\right)^{(1-\sigma)/(9\sigma-4(\sigma^2+1))}T^{3(1-\sigma)/(2-\sigma)}, &\text{otherwise.}\end{cases}
\end{equation}
Now, using \eqref{trivialsparse} and Lemma~\ref{L4mom} in Theorem~\ref{mainresult1}, we also have
\begin{equation*}
\begin{split}
    \sum_{q\in\mathcal{Q}} & \ \sumstar_{\chi\bmod{q}}N(\sigma,T,\chi) \\
    & \ll(QT)^\varepsilon\Big( (|\mathcal{Q}|QT)^{1/3}\min\left(|\mathcal{Q}|QT+|\mathcal{Q}|X,Q^2T+X\right)^{2/3}Y^{2(1-2\sigma)/3} \\
    & \hspace*{4cm} +\min\left(|\mathcal{Q}|QTX^{1-2\sigma}+|\mathcal{Q}|Y^{2-2\sigma},Q^2TX^{1-2\sigma}+Y^{2-2\sigma}\right)\Big).
    \end{split}
\end{equation*}
We firstly take 
\begin{equation*}
    X=QT\quad\text{and}\quad Y=X^{3/2(2-\sigma)}
\end{equation*}
and then we take
\begin{equation*}
    X=Q^2T\quad\text{and}\quad Y=X^{3/2(2-\sigma)},
\end{equation*}
and on comparing these results, we arrive at
\begin{equation}\label{cor2}
    \sum_{q\in\mathcal{Q}} \ \sumstar_{\chi\bmod{q}}N(\sigma,T,\chi)\ll(QT)^\varepsilon\min\left(|\mathcal{Q}|Q^{3(1-\sigma)/(2-\sigma)},Q^{6(1-\sigma)/(2-\sigma)}\right)T^{3(1-\sigma)/(2-\sigma)}.
\end{equation}
Our desired result follows from comparing \eqref{cor1} with \eqref{cor2} and Theorem~\ref{mainresult2}.
\end{proof}

Now, considering the case in which $\mathcal{Q}$ is the set of $k$-power moduli, then from \cite[Theorem 1]{LZ10}, we have, for any integer $k\geq3$, and any $Q$, $\varepsilon>0$, 
\begin{equation} \label{BZls}
    \sum_{q\leq Q} \ \sumstar_{\chi \bmod{q^k}}|R(\chi)|^2\ll(QN)^\varepsilon\left(Q^{k+1}+N+Q^kN^{1/2}\right)\sum_{n\leq N}|a_n|^2,
\end{equation}
which is just a special case of \eqref{baiersparse}.  Thus \eqref{BZls} will lead to a result already contained in Theorem~\ref{sparse} and gives precisely Corollary~\ref{bzkpower}.   We note here that \eqref{BZls} has been improved in certain ranges by a number of authors, K. Halupczok \cites{Halu2,Hal18}, M. Munsch \cite{mun21}, K. Halupczok and M. Munsch \cite{halmun22}, and R. C. Baker, M. Munsch, and I. E. Shparlinski \cite{BaMuSh}.  Unfortunately, using the method here, the results in \cites{Halu2, Hal18,halmun22,BaMuSh} do not lead to any outcome better than Corollary~\ref{bzkpower}. \newline

In the case of square moduli, the best known large sieve inequality is found in \cite{LZ11}, 
\begin{equation}\label{baierzhao2}
    \sum_{q\leq Q}\ \sumstar_{\chi\bmod{q^2}}|R(\chi)|^2\ll(QN)^\varepsilon\left(Q^3+N+\min\left(N\sqrt{Q},Q^2\sqrt{N}\right)\right)\sum_{n\leq N}|a_n|^2
\end{equation}
and from this we can derive Theorem~\ref{bz2}, which is better than what Theorem~\ref{sparse} gives in certain regions. 

\begin{proof}[Proof of Theorem~\ref{bz2}] By Theorem~\ref{mainresult1} we have 
\begin{equation*}
\begin{split}
    \sum_{q\leq Q} & \ \sumstar_{\chi\bmod{q^2}}N(\sigma,T,\chi) \\
    & \ll(QT)^\varepsilon \left( \left(Q^3T\right)^{1/3}\left(Q^3T+X+\min\left(Q^{1/2}X,Q^2T^{1/2}X^{1/2}\right)\right)^{2/3}Y^{2(1-2\sigma)/3}+Q^3TX^{1-2\sigma}+Y^{2-2\sigma} \right. \\
    & \hspace*{4cm} \left. +\min\left(Q^{1/2}Y^{2-2\sigma},Q^2\begin{cases}Y^{3/2-2\sigma}&\text{if}\quad\sigma\leq\tfrac{3}{4}\\X^{3/2-2\sigma}&\text{otherwise}\end{cases}\right) \right).
    \end{split}
\end{equation*}
To get the desired result, we simply take
\begin{equation*}
    X=Q^2T\quad\text{and}\quad Y=Q^{15/4(2-\sigma)}T^{3/2(2-\sigma)}
\end{equation*}
if $\sigma \leq \tfrac{3}{4}$, and
\begin{equation*}
    X=Q^{(10\sigma-6)/(9\sigma-4(\sigma^2+1))}T\quad\text{and}\quad Y=Q^{(28\sigma-17)/(18\sigma-8(\sigma^2+1))}T^{3/2(2-\sigma)}
\end{equation*}
in the latter case.  Hence
\begin{equation}\label{corsq}
    \sum_{q\leq Q} \ \sumstar_{\chi\bmod{q^2}}N(\sigma,T,\chi)\ll(QT)^\varepsilon \begin{cases}Q^{(17-16\sigma)/2(2-\sigma)}T^{3(1-\sigma)/(2-\sigma)}&\text{if}\quad\tfrac{1}{2}\leq\sigma\leq\tfrac{3}{4}\\ Q^{(1-\sigma)(28\sigma-17)/(9\sigma-4(\sigma^2+1))}T^{3(1-\sigma)/(2-\sigma)}&\text{otherwise.}
    \end{cases}
\end{equation}
The result follows on comparing \eqref{corsq} with Theorem~\ref{sparse}.
\end{proof}

In \cite{LZ1}, an optimal conjectural large sieve inequality for power moduli is given and it yields the bound
\begin{equation}     \label{conjecturek}
    \sum_{q\leq Q} \ \sumstar_{\chi \bmod{q^k}}|R(\chi)|^2\ll Q^\varepsilon\left(Q^{k+1}+N\right)\sum_{n\leq N}|a_n|^2.
\end{equation}
If \eqref{conjecturek} holds, then
\begin{equation*}
    \sum_{q\leq Q} \ \sumstar_{\chi \bmod{q^k}} N(\sigma,T,\chi)\ll\left(Q^{k+1}T\right)^{3(1-\sigma)/(2-\sigma)+\varepsilon}
\end{equation*}
holds for all positive $Q$ and $T$.

\section{Proof of Theorem~\ref{fixedorder}}

To establish Theorem~\ref{fixedorder}, we require, in view of \eqref{sumu2u} and \eqref{monhol}, bounds for the following
\begin{equation} \label{R&L4}
 \sum_{\chi \in \mathcal{C}} | R(\chi) |^2 \quad \mbox{and} \quad \sum_{\chi \in \mathcal{C}} | L ( \tfrac{1}{2} + it , \chi) |^4 ,
 \end{equation}
where $\mathcal{C}$ is the family of characters under our consideration. \newline

For $\mathcal{C}_2 (Q)$, using \cite[Corollary 3]{HB2}, we get
\begin{equation} \label{quadRbound}
\sum_{\chi \in \mathcal{C}_2(Q)} | R(\chi) |^2 \ll (QN)^{\varepsilon} (QN +N^2) \max_{n \leq N} |a_n|^2.
\end{equation}
By setting $\sigma=1/2$ in Theorem 2 of \cite{HB2}, we get for $T>1$ and $|t| \leq T$,
\begin{equation} \label{quadL4bound}
\sum_{\chi \in \mathcal{C}_2(Q)} | L ( \tfrac{1}{2} + it , \chi) |^4 \ll (QT)^{1+\varepsilon} ,
\end{equation}
Using \eqref{quadRbound} and Lemma~\ref{Savebound} with some minor changes (the bound \eqref{quadRbound} is formally different from what is in the condition of Lemma~\ref{Savebound}), we get
\begin{equation} \label{quadR1bound} \sum_{(\rho,\chi)\in\mathcal{R}_1}\left|\sum_{n=U}^{2U}\mathfrak{m}_{X,n}\chi(n)n^{-\rho}e^{-n/Y}\right|^2 \ll (QT)^{\varepsilon} (QTX^{1-2\sigma} +Y^{2-2\sigma} ) ,
\end{equation}
Now \eqref{Sitbound1} and \eqref{Sitbound2}, together with \cite[Corollary 1]{HB2}, produce the bound
\begin{equation} \label{quadMbound}
 \sum_{(\rho,\chi)\in\mathcal{R}_2}\left|M_X\left(\tfrac{1}{2}+it_\rho,\chi\right)\right|^2  \ll (QT)^{\varepsilon} (QT+X).
 \end{equation}
Now putting \eqref{quadL4bound}, \eqref{quadR1bound} and \eqref{quadMbound} into \eqref{sumu2u} and \eqref{monhol}, we get
\[  \sum_{\chi\in \mathcal{C}_2(Q)} N(\sigma, T, \chi) \ll (QT)^{\varepsilon} \left( (QT^2)^{1/3} (QT+X)^{2/3} Y^{2(1-2\sigma)/3} + QT X^{1-2\sigma} +Y^{2-2\sigma} \right). \]
Setting 
\[ X=QT \quad \mbox{and} \quad Y= (Q^3T^4)^{1/(2(2-\sigma))} , \]
we arrive at the first term in the minimum in \eqref{quadzeroden}. \newline

In the case of $\mathcal{C}_j (Q)$ with $j=3$, $4$ and $6$, we use the results in \cites{BaYo, GaoZhao2}.  The only minor obstruction is that one requires the sum over $n$ in $R(\chi)$ to be over square free $n$'s.  This is easily handled by rewriting $n=kl^2$ with $k$ square-free and applying Cauchy's inequality and then utilizing the large sieve inequalities for cubic, quartic and sextic characters.  For $j=3$ and $6$, we get from Theorems 1.4 and 1.5 of \cite{BaYo},
\begin{equation} \label{cubicRbound}
\sum_{\chi \in \mathcal{C}_j(Q)} | R(\chi) |^2 \ll (QN)^{\varepsilon} \min \{ Q^{5/2} N^{1/2} +N^{3/2}, Q^{11/9} + Q^{2/3} N \} \sum_{n \leq N} |a_n|^2.
\end{equation}
With $j=4$, from Lemma 2.10 of \cite{GaoZhao2}, which is an improvement of \cite[Theorem 1.2]{GaoZhao}, emerges the bound
\begin{equation} \label{quarticRbound}
\sum_{\chi \in \mathcal{C}_4(Q)} | R(\chi) |^2 \ll (QN)^{\varepsilon} \min\{ Q^{3/2} N^{1/2} +N^{3/2}, Q^{7/6} + Q^{2/3} N \} \sum_{n \leq N} |a_n|^2.
\end{equation}
The results in \cites{BaYo, GaoZhao2} have more terms in the minimum than those given in \eqref{cubicRbound} and \eqref{quarticRbound}.  Here, we only cite what we will use in the sequal. \newline

If $j=3$ or $6$, then for all $T\gg Q^\frac{2}{3}$ we have
\begin{equation} \label{CQSL4bound}
\sum_{\chi \in \mathcal{C}_j(Q)} | L ( \tfrac{1}{2} + it , \chi) |^4 \ll (QT)^{3/2+\varepsilon} . 
\end{equation}
The bound \eqref{CQSL4bound} also holds if $j=4$ and $T\gg Q^\frac{1}{2}$.  The proof of \eqref{CQSL4bound} uses the same arguments as that of \cite[Theorem 2]{HB2}.  The only difference is, instead of \eqref{quadRbound}, one uses \eqref{cubicRbound} or \eqref{quarticRbound} with the first terms in the minimums at the appropriate places. \newline

Now proceeding in the same way as for $\mathcal{C}_2(Q)$, using \eqref{CQSL4bound} and the second terms in the minimums given in the bounds \eqref{cubicRbound} and \eqref{quarticRbound}, we deduce
\begin{equation} \label{cubicxybound}
\begin{split}
\sum_{\chi \in \mathcal{C}_j(Q)} & N(\sigma, T, \chi) \\
&  \ll (QT)^{\varepsilon} \left( (Q^{3/2}T^{5/2})^{1/3} (Q^{11/9} T + Q^{2/3}X)^{2/3} Y^{2(1-2\sigma)/3} + Q^{11/9} T \max (X^{3/2-2\sigma}, Y^{3/2-2\sigma} ) + Q^{2/3}Y^{5/2-2\sigma} \right)
\end{split}
\end{equation}
for $j = 3$, $6$ and
\begin{equation} \label{quartxybound}
\begin{split}
\sum_{\chi \in \mathcal{C}_4(Q)} & N(\sigma, T, \chi) \\
&  \ll (QT)^{\varepsilon} \left( (Q^{3/2}T^{5/2})^{1/3} (Q^{7/6} T + Q^{2/3}X)^{2/3} Y^{2(1-2\sigma)/3} + Q^{7/6} T \max (X^{3/2-2\sigma}, Y^{3/2-2\sigma} ) + Q^{2/3}Y^{5/2-2\sigma} \right) .
\end{split}
\end{equation}
Taking
\[ X = Q^{5/27}{T^{1/2}} \quad \mbox{and} \quad Y = Q^{5/(45-36\sigma)} T^{12/(11-4\sigma)} \]
in \eqref{cubicxybound} and
\[ X = Q^{2/9}{T^{1/2}} \quad \mbox{and} \quad Y = Q^{2/(15-12\sigma)} T^{12/(11-4\sigma)} \]
in \eqref{quartxybound}, we get the first terms in the minimums in \eqref{CSzeroden} and \eqref{quartzeroden}.  The second terms in the minimums in \eqref{quadzeroden}, \eqref{CSzeroden} and \eqref{quartzeroden} are derived from Theorem~\ref{mainresult2} and either \eqref{quadL4bound} or \eqref{CQSL4bound}.  This concludes the proof of Theorem~\ref{fixedorder}. \newline

M. Jutila \cite[Theorem 2]{Jutila} previously gave the bound
\begin{equation}\label{jutrealbound}
\sum_{\chi\in\mathcal{C}_2(Q)}N(\sigma,T,\chi)\ll(QT)^{(7-6\sigma)/(6-4\sigma)+\varepsilon}
\end{equation}
without the advantage of the mean value estimate \eqref{quadRbound}.  After proving \eqref{quadRbound}, D. R. Heath-Brown \cite[Theorem 3]{HB2} was able to improve the $Q$-aspect of \eqref{jutrealbound} to
\begin{equation}\label{hbrealbound}
\sum_{\chi\in\mathcal{C}_2(Q)}N(\sigma,T,\chi)\ll(QT)^\varepsilon Q^{3(1-\sigma)/(2-\sigma)}T^{(3-2\sigma)/(2-\sigma)}.
\end{equation}
However, \eqref{hbrealbound} was obtained by first bounding the number of zeros in the subregions 
\[\{\rho:\sigma\leq\beta<\sigma+(\log QT)^{-1},\tau\leq\gamma<\tau+(\log QT)^{-1}\}\quad\text{with}\quad|\tau|\leq T\]
and then summing trivially over these subregions to obtain a bound for the total number of zeros in the rectangle $\{\rho:\sigma\leq\beta\leq1,|\gamma|\leq T\}$.  By considering the whole rectangle from the start and employing Lemma~\ref{Savebound} to average over the $\rho$ in the rectangle, we are able to improve the $T$-aspect of \eqref{hbrealbound} in our result \eqref{quadzeroden}.  Moreover, \eqref{quadzeroden} is an improvement of \eqref{jutrealbound} when $Q^{-4+11\sigma-6\sigma^2}\gg T^{-10+21\sigma-10\sigma^2}$, which is true for all $Q$, $T>1$ when $\sigma\geq \frac{21-\sqrt{41}}{20}\doteqdot0.7298$.\newline

We end the paper with the following remark.  Recent heuristics in \cite{DuRa} gave rise to some surprising revelations on the true optimal bound in the large sieve inequality for cubic Hecke characters, based on which, as well as its quartic analogue, the estimates in \eqref{cubicRbound} and \eqref{quarticRbound} are derived.  Thus it gives one pause in conjecturing what the best possible form of the large sieve inequality for cubic and quartic Dirichlet characters should be.  Consequently, unlike Theorem~\ref{sparse}, it is unclear what the best possible unconditional bounds one can hope for in \eqref{CSzeroden} and \eqref{quartzeroden} may be using the methods of this paper.

\vspace*{.5cm}

\noindent{\bf Acknowledgments.}  The results of this paper form part of the first-named author's honors thesis at the University of New South Wales (UNSW).  The authors were supported by the Faculty Silverstar Award PS65447 at UNSW during this work.

\bibliography{zerodenfamdirichlet12}
\bibliographystyle{amsxport}

\end{document}